\documentclass[12pt, a4paper]{amsart}

\usepackage{ifthen}
\usepackage{amsfonts}
\usepackage{amssymb}
\usepackage{array}
\usepackage{eucal}
\usepackage[margin=1.2in]{geometry}
\usepackage{xcolor}
\definecolor{cite}{rgb}{0.50,0.00,1.00}
\definecolor{url}{rgb}{0.00,0.50,0.75}
\definecolor{link}{rgb}{0.00,0.00,0.50}
\usepackage[colorlinks,linkcolor=link,urlcolor=url,citecolor=cite,pagebackref,breaklinks]{hyperref}
\usepackage{indentfirst}
\usepackage{lscape}
\usepackage{mathtools}
\usepackage{mathrsfs}
\usepackage{bbm}
\usepackage[all]{xypic}
\usepackage[lite,abbrev,msc-links]{amsrefs}

\newcommand{\GL}{{\mathrm{GL}}}

\newcommand{\rk}{{\mathrm{k}}}

\newcommand{\od}{\operatorname{d}}

\renewcommand{\rk}{\mathrm k}

\newcommand{\C}{\mathbb{C}}

\newcommand{\abs}[1]{\lvert#1\rvert}

\newcommand{\la}{\langle}
\newcommand{\ra}{\rangle}
\newcommand{\be}{\begin {equation}}
\newcommand{\ee}{\end {equation}}
\newcommand{\bp}{\begin {proof}}
\newcommand{\ep}{\end {proof}}
\newcommand{\bee}{\begin {equation*}}
\newcommand{\eee}{\end {equation*}}

\theoremstyle{Theorem}

\newtheorem{thm}{Theorem}[section]

\newtheorem{lemt}[thm]{Lemma}
\newtheorem{prpt}[thm]{Proposition}
\newtheorem{thmt}[thm]{Theorem}
\newtheorem{remt}[thm]{Remark}

\theoremstyle{Theorem}

\theoremstyle{Theorem}

\theoremstyle{Plain}

\theoremstyle{Definition}

\begin{document}

\title{Vector-valued Gelfand-Kazhdan criterion}

\author{Fulin Chen}\address{School of Mathematical Sciences, Xiamen University,
	Xiamen, 361005, China} \email{chenf@xmu.edu.cn}

\author{Binyong Sun}
\address{Institute for Advanced Study in Mathematics and New Cornerstone Science Laboratory, Zhejiang University,  Hangzhou, 310058, China}
\email{sunbinyong@zju.edu.cn}

\author{Yixiang Weng}
\address{School of Mathematical Sciences, Zhejiang University,  Hangzhou, 310058, China}
\email{12235036@zju.edu.cn}
\subjclass[2020]{22E30, 22E50}

\keywords{Gelfand-Kazhdan criterion, Gelfand pair, relative Langlands program,  local Asai Rankin–Selberg period}

\begin{abstract}
The Gelfand–Kazhdan criterion is a fundamental tool for studying multiplicity-one properties of local periods of representations. However, it does not apply to many cases arising in the relative Langlands program. Generalizing the usual Gelfand–Kazhdan criterion, we formulate and prove a vector-valued Gelfand–Kazhdan criterion that fits into the general framework of the relative Langlands program. As an illustration of its effectiveness, we establish the multiplicity-one property for the local Asai Rankin–Selberg periods.
\end{abstract}

\maketitle

\section{Introduction and main results}
\label{sect:intro}

One of the major problems in representation theory is the branching problem, which studies the occurrence of representations of a smaller group within representations of a larger group.  

Let $G$ be a topological group. By a representation of $G$, we mean a quasi-complete, locally convex, Hausdorff topological vector space over $\C$ together with a continuous linear $G$-action on it. For abbreviation and as usual, we do not distinguish a representation from its underlying vector space. Let $H$ be another topological group, endowed with a fixed representation $\omega$ of it. Suppose that we are given an injective continuous homomorphism $H\rightarrow G$, through which $H$ is regarded as a subgroup of $G$. For every representation $\pi$ of $G$, the general branching problem is to understand the space
\[
\mathrm B_H(\pi\times \omega,\C):=\{\textrm{$H$-invariant separately continuous bilinear forms on $\pi\times \omega$}\}.
\]
We call an element of the above space an $\omega$-period of $\pi$. 

From now on, we assume that $G$ is either a real reductive group or a second-countable totally disconnected locally compact Hausdorff topological group. We say that $(G, \omega)$ is a Gelfand pair, or that it has the multiplicity-one property, if
\[
\dim \mathrm B_H(\pi\times \omega,\C) \leq 1 
\]
for all irreducible admissible smooth representations $\pi$ of $G$. Here, when $G$ is a real reductive group, ``admissible smooth representations" means Casselman–Wallach representations (see \cite{Ca}, \cite[Chapter 11]{Wa}, or \cite{BK}); when $G$ is totally disconnected, the term retains its usual meaning. In the latter case, the representation $\pi$ is equipped with the finest locally convex topology.

When $\omega$ is one-dimensional, the multiplicity-one property for a wide range of pairs $(G, \omega)$ has been extensively studied; see, for instance, \cite{AG, AGS, AGS2, AGRS, BD, D, F91, JR, JSZ, LST, Sh, Sun, SZ, Z}. A unifying theme in these works is the use of the Gelfand–Kazhdan criterion (see \cite{GK, SZ2}), which reduces the verification of the multiplicity-one property to the vanishing of certain equivariant generalized functions.

In the general formulation of the relative Langlands program introduced by Ben-Zvi, Sakellaridis, and Venkatesh (see \cite{BZSV}), particular interest lies in cases where $(H, \omega)$ arises from a hyperspherical Hamiltonian space (see \cite[Section 3]{BZSV} and \cite[Definition 1.2]{MWZ}). In this setting, $H$ admits a semidirect product decomposition $H_0 \ltimes U$, where $H_0$ is a reductive group and $U$ is a unipotent group associated with a certain $\mathfrak{sl}_2$-triple. Moreover, $\omega$ is realized on an irreducible smooth representation of a certain Heisenberg group attached to this $\mathfrak{sl}_2$-triple and a certain symplectic representation of $H_0$. In particular, $\omega$ is often infinite-dimensional.

As emphasized in \cite[Remark 1.2.2]{BZSV}, a central feature of the framework of Ben-Zvi, Sakellaridis, and Venkatesh is the notion of a hyperspherical variety, which is obtained by abstracting the role of multiplicity-one in the Langlands program. They further note that when $(G, \omega)$ is a Gelfand pair, $\omega$-periods are closely related to $L$-functions. Consequently, from the perspective of studying automorphic $L$-functions, it is important to determine which pairs $(G, \omega)$ arising from hyperspherical Hamiltonian spaces are Gelfand pairs.

As mentioned earlier, the principal tool in the literature for establishing multiplicity one is the Gelfand–Kazhdan criterion. However, this criterion applies only when $\omega$ is one-dimensional and therefore does not cover the more general situations that arise in the framework of \cite{BZSV}. In this note, we formulate and prove a \emph{vector-valued Gelfand–Kazhdan criterion} (see Theorem \ref{thm:main2}) that applies to representations $\omega$ of arbitrary dimension, thereby providing a genuine generalization of the classical criterion.  

To state the main result of this note, we introduce further notation. Denote by $\mathrm{C}^{\varsigma}(G)$ the space of Schwartz functions on $G$. More precisely, if $G$ is totally disconnected, then
\[
\mathrm{C}^{\varsigma}(G):= \{\textrm{locally constant $\C$-valued functions on $G$ with compact support}\}; 
\]
while if $G$ is a real reductive group, then $\mathrm{C}^{\varsigma}(G)$ consists of all $\C$-valued smooth functions $f$ on $G$ such that 
\begin{equation}\label{Nash}
\sup_{x\in G}\, \abs{\varphi(x)\cdot (Df)(x)}<\infty
\end{equation}
for every left-invariant differential operator $D$ on $G$ and every $\C$-valued smooth function $\varphi$ on $G$ whose left $G$-translations span a finite-dimensional space.

Define the space of Schwartz densities on $G$ by 
\[
\mathrm D^{\varsigma}(G):= \mathrm{C}^{\varsigma}(G)\cdot (\textrm{a fixed left-invariant Haar measure on $G$}).
\]
This is a space of measures on $G$ and forms a $\C$-algebra (without identity, in general) under convolution. When $G$ is a real reductive group, $\mathrm D^{\varsigma}(G)$ is naturally a Fréchet space; when $G$ is totally disconnected, $\mathrm D^{\varsigma}(G)$ is equipped with the finest locally convex topology, as before.

Let $H_1$ and $H_2$ be two topological groups, with fixed injective continuous homomorphisms $H_1\rightarrow G$ and $H_2\rightarrow G$. Let $\omega_1$ and $\omega_2$ be representations of $H_1$ and $H_2$, respectively. Then the space $\mathrm D^\varsigma(G)\times \omega_1\times \omega_2$ carries a natural left action of the group $H_1\times H_2$, where $H_1$ acts on $\mathrm D^{\varsigma}(G)$ by left translations and $H_2$ acts on $\mathrm D^{\varsigma}(G)$ by right translations.

\begin{thm}\label{thmgk}
Suppose that there exists a topological anti-automorphism $\sigma: G\rightarrow G$ and topological linear isomorphisms $\sigma_1: \omega_1\rightarrow \omega_2$ and $\sigma_2: \omega_2\rightarrow \omega_1$ with the following property: every $H_1\times H_2$-invariant separately continuous trilinear form
\[
\mathrm D^\varsigma(G)\times \omega_1\times \omega_2 \rightarrow \C
\]
is invariant under the linear automorphism
\begin{equation}\label{switch}
\mathrm D^\varsigma(G)\times \omega_1\times \omega_2\rightarrow \mathrm D^\varsigma(G)\times \omega_1\times \omega_2, \quad (\eta, u_1, u_2)\mapsto (\sigma(\eta), \sigma_2(u_2), \sigma_1(u_1)).
\end{equation}
Then, for every irreducible admissible smooth representation $\pi$ of $G$, 
\[
\dim \mathrm{B}_{H_1}(\pi\times \omega_1, \C) \cdot \dim \mathrm{B}_{H_2}(\pi^\vee \times \omega_2, \C)\leq 1.
\]
\end{thm}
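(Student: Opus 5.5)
The proof follows the classical Gelfand–Kazhdan argument, with the one-dimensional character replaced by the pairing with $\omega_1\times\omega_2$.

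Suppose both spaces are nonzero. Fix a nonzero $\lambda_2\in \mathrm B_{H_2}(\pi^\vee\times\omega_2,\C)$, and let $\lambda_1\in \mathrm B_{H_1}(\pi\times\omega_1,\C)$ be arbitrary. The goal is to show that $\lambda_1$ is determined up to scalar by $\lambda_2$, so that $\dim \mathrm B_{H_1}(\pi\times \omega_1,\C)\le 1$. By the symmetry of the hypothesis (the switch \eqref{switch}) the same argument bounds the other dimension, and the product is then at most $1$.

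\textbf{Step 1: the trilinear form.} For $\eta\in\mathrm D^\varsigma(G)$, the operator $\pi(\eta)\colon\pi\to\pi$ is continuous, and it factors through the smooth vectors of the dual. More precisely, for admissible $\pi$ the form $v^\vee\mapsto \langle \pi(\eta)^{*}\ldots\rangle$ makes sense: the map $\eta\mapsto \pi(\eta)$ lands in operators with image in $\pi$ and which extend continuously from $\pi^\vee$-weak topologies. In the real case this uses Casselman–Wallach theory, namely that $\pi(\eta)$ maps the continuous dual into $\pi$ itself; in the totally disconnected case it is the standard fact that $\pi(\eta)$ has finite rank. So we can define
\[
\Lambda(\eta,u_1,u_2):=\lambda_1\bigl(\pi(\eta)\,\lambda_2(\,\cdot\,,u_2)^{\flat},\,u_1\bigr).
\]
Here $\lambda_2(\cdot,u_2)$ is a continuous functional on $\pi^\vee$, and hence, by admissibility and reflexivity, a vector of a suitable completion of $\pi$ that $\pi(\eta)$ maps into $\pi$. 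Equivalently, $\Lambda(\eta,u_1,u_2)$ is obtained from the distribution $g\mapsto$ ``$\lambda_1\otimes\lambda_2$ applied to $g$'' (the relative character) integrated against $\eta$. Then $\Lambda$ is separately continuous and $H_1\times H_2$-invariant: left translation by $H_1$ is absorbed by the invariance of $\lambda_1$, and right translation by $H_2$ by the invariance of $\lambda_2$. I expect this step to be the main technical obstacle: one must justify that $\pi(\eta)$ sends the abstract vector $\lambda_2(\cdot,u_2)$ into $\pi$, and that separate continuity holds. I would use the factorization $\mathrm D^\varsigma(G)=\mathrm D^\varsigma(G)*\mathrm D^\varsigma(G)$ (Dixmier–Malliavin in the real case, idempotents in the $p$-adic case), writing $\pi(\eta_1*\eta_2)=\pi(\eta_1)\pi(\eta_2)$ so that both $\lambda_1$ and $\lambda_2$ are smoothed.

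\textbf{Step 2: apply the hypothesis and identify the twisted representation.} By the hypothesis, $\Lambda(\sigma(\eta),\sigma_2(u_2),\sigma_1(u_1))=\Lambda(\eta,u_1,u_2)$. Let $\pi^\sigma$ denote the contragredient twisted by $\sigma$, i.e.\ $g\mapsto \pi^\vee(\sigma(g)^{-1})$. The swapped form is a matrix-coefficient-type form for $\pi^\sigma$ with functionals $\lambda_2\circ(\mathrm{id}\times\sigma_1)$ and $\lambda_1\circ(\mathrm{id}\times\sigma_2)$. The standard argument then shows $\pi^\sigma\cong\pi$. I would prove this by taking $\eta$ in the annihilator: the left kernel of $\Lambda$ in $\mathrm D^\varsigma(G)$ contains $\mathrm{Ann}(\pi)$, and is controlled by the kernel for $\pi^\sigma$. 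The nonvanishing of $\Lambda$ follows from the irreducibility of $\pi$, since the span of $\pi(\eta)v$ is dense.

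\textbf{Step 3: conclude uniqueness.} Fix $\lambda_2$ and $u_2$ with $\lambda_2(\cdot,u_2)\ne0$. The identity shows that
\[
\Lambda(\eta,u_1,u_2)=\Lambda(\sigma(\eta),\sigma_2(u_2),\sigma_1(u_1)),
\]
and the right-hand side depends on $\lambda_1$ only through the functional in the second slot after $\pi(\sigma(\eta))$ is applied to a fixed vector built from $\lambda_2$. Moreover, $\pi(\sigma(\eta))$ applied to a fixed vector ranges over a dense subspace of $\pi$, by irreducibility. So if $\lambda_1,\lambda_1'$ are two elements, a suitable linear combination $\lambda_1-c\lambda_1'$ yields $\Lambda\equiv0$. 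The switched identity then forces $\lambda_1-c\lambda_1'$ to vanish on the dense subspace $\pi(\mathrm D^\varsigma(G))\pi=\pi$, i.e.\ $\lambda_1=c\lambda_1'$. The precise choice of $c$ comes from evaluating both forms at a single triple where $\Lambda'\ne0$, combined with the fact that the map $\eta\mapsto\Lambda(\eta,\cdot,\cdot)$ factors through a rank-one tensor. This is the vector-valued analogue of the classical statement that a bi-invariant distribution determines both functionals.
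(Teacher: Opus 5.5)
Your Step 1 is the paper's trilinear form $c_{\lambda_1\otimes\lambda_2}$. The facts $\mathrm D^\varsigma(G).\pi^{-\infty}=\pi$ and separate continuity are exactly what the paper quotes from the literature. Step 3, however, carries all the content, and it has a genuine gap.

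You claim that for two elements $\lambda_1,\lambda_1'$ some $c$ makes $\Lambda_{\lambda_1-c\lambda_1'}\equiv 0$, with $c$ fixed by comparing $\Lambda$ and $\Lambda'$ at a single triple. That only forces vanishing at that one triple. Your assertion that $\eta\mapsto\Lambda(\eta,\cdot,\cdot)$ ``factors through a rank-one tensor'', so that one triple suffices, is precisely the one-dimensionality to be proved, so the step is circular. Also, once $\Lambda_{\lambda_1-c\lambda_1'}\equiv 0$ is known, $\lambda_1=c\lambda_1'$ follows at once from $\mathrm D^\varsigma(G).\phi=\pi$ for $0\neq\phi\in\pi^{-\infty}$. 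So you invoke the switch where it is not needed and never use it where it is. Step 2 does not rescue this: $\pi^\sigma\cong\pi$ is only gestured at, via annihilators of $\pi$ as a whole, and is never turned into a relation between $\lambda_1$ and $\lambda_1'$.

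The missing idea is to compare annihilators of the individual generalized vectors. Fix $v_2\in\omega_2$. The vector $\eta.(\lambda_2|_{v_2})\in\pi$ vanishes iff $\Lambda(\eta'*\eta,v_1,v_2)=0$ for all $\eta'$ and $v_1$. Switch-invariance together with the same generation property then gives
\[
\eta.(\lambda_2|_{v_2})=0\iff(\sigma(\eta))^\vee.(\lambda_1|_{\sigma_2(v_2)})=0\qquad(\eta\in\mathrm D^\varsigma(G)),
\]
and the right side does not involve $\lambda_2$. The argument then runs as follows:
\begin{enumerate}
\item For a second $\lambda_2'$, the elements $\lambda_2|_{v_2}$ and $\lambda_2'|_{v_2}$ of $\pi^{-\infty}$ therefore have the same annihilator in $\mathrm D^\varsigma(G)$.
\item Hence $\eta.(\lambda_2|_{v_2})\mapsto\eta.(\lambda_2'|_{v_2})$ is a well-defined $G$-endomorphism of $\pi$. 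It is continuous by separate continuity and the open mapping theorem, so it is a scalar $a(v_2)$ by Schur's lemma, giving $\lambda_2'|_{v_2}=a(v_2)\lambda_2|_{v_2}$.
\item An elementary lemma removes the dependence on $v_2$: if $\lambda(x,y)=0$ always implies $\lambda'(x,y)=0$, then $\lambda'$ is a multiple of $\lambda$.
\item Exchanging the roles of $\lambda_1$ and $\lambda_2$ gives the other bound.
\end{enumerate}
None of the three key ingredients appears in your proposal: the vector-wise annihilator comparison, the Schur step with its continuity check, and the passage from a $v_2$-dependent scalar to a uniform one.

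Your Step 2 can be made correct in this spirit. The displayed equivalence shows that $\eta.(\lambda_2|_{v_2})\mapsto(\sigma(\eta))^\vee.(\lambda_1|_{\sigma_2(v_2)})$ is a well-defined isomorphism of $\pi$ onto your twist $\pi^\sigma$. Uniqueness of such isomorphisms up to scalar is then the same Schur argument.
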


Here $\pi^\vee$ denotes the contragredient of $\pi$, and $\sigma:\mathrm D^\varsigma(G)\rightarrow \mathrm D^\varsigma(G)$ denotes the push-forward of Schwartz densities by $\sigma: G \to G$.

We say that a topological automorphism $\sigma': G \to G$ is a \emph{Chevalley automorphism} if for every irreducible admissible smooth representation $\pi$ of $G$ there exists a topological linear isomorphism $\pi \to \pi^\vee$ that is equivariant with respect to $\sigma'$.

The following lemma is immediate.

\begin{lemt}\label{thmgk2}
Suppose that there exist a Chevalley automorphism $\sigma': G \to G$ and a topological linear isomorphism $\varphi: \omega_1 \to \omega_2$ such that 
\begin{itemize}
\item $\sigma'(H_1) = H_2$; and
\item $\varphi$ is equivariant with respect to the induced isomorphism $\sigma': H_1 \to H_2$.
\end{itemize}
Then, for every irreducible admissible smooth representation $\pi$ of $G$,
\[
\dim \mathrm{B}_{H_1}(\pi \times \omega_1, \C) = \dim \mathrm{B}_{H_2}(\pi^\vee \times \omega_2, \C).
\]
\end{lemt}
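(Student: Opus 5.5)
We construct an explicit linear isomorphism $\mathrm{B}_{H_1}(\pi\times\omega_1,\C)\to \mathrm{B}_{H_2}(\pi^\vee\times\omega_2,\C)$ by transporting bilinear forms along the given isomorphisms, and check that it preserves invariance and separate continuity.

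Fix an irreducible admissible smooth representation $\pi$ of $G$. Since $\sigma'$ is a Chevalley automorphism, there is a topological linear isomorphism $\phi:\pi\to\pi^\vee$ with $\phi(g\cdot v)=\sigma'(g)\cdot\phi(v)$ for all $g\in G$ and $v\in\pi$. Given $\lambda\in \mathrm{B}_{H_1}(\pi\times\omega_1,\C)$, define
\[
\lambda'(w,u):=\lambda\bigl(\phi^{-1}(w),\varphi^{-1}(u)\bigr),\qquad w\in\pi^\vee,\ u\in\omega_2 .
\]
Separate continuity of $\lambda'$ follows from that of $\lambda$, because $\phi^{-1}$ and $\varphi^{-1}$ are continuous linear maps.

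For invariance, take $h_2\in H_2$. By the first hypothesis we may write $h_2=\sigma'(h_1)$ with $h_1\in H_1$. Since $\sigma'$ restricts to an isomorphism $H_1\to H_2$, the equivariance of $\phi$ and of $\varphi$ give
\[
\phi^{-1}(h_2 w)=h_1\phi^{-1}(w),\qquad \varphi^{-1}(h_2u)=h_1\varphi^{-1}(u).
\]
Hence
\[
\lambda'(h_2w,h_2u)=\lambda\bigl(h_1\phi^{-1}(w),h_1\varphi^{-1}(u)\bigr)=\lambda'(w,u)
\]
by the $H_1$-invariance of $\lambda$. So $\lambda\mapsto\lambda'$ is a linear map $\mathrm{B}_{H_1}(\pi\times\omega_1,\C)\to \mathrm{B}_{H_2}(\pi^\vee\times\omega_2,\C)$.

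For the inverse, send $\mu\in \mathrm{B}_{H_2}(\pi^\vee\times\omega_2,\C)$ to $(v,u)\mapsto\mu(\phi(v),\varphi(u))$. The same computation, run with $\sigma'^{-1}$, shows this form lies in $\mathrm{B}_{H_1}(\pi\times\omega_1,\C)$. The two maps are mutually inverse, so the two spaces are isomorphic and their dimensions agree.

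The argument uses only the continuity, linearity and equivariance of $\phi$ and $\varphi$, together with $\sigma'(H_1)=H_2$; no deep input is needed. The point requiring care is that equivariance of $\phi$ and $\varphi$ is stated via $\sigma'$, so invariance under $H_2$ must be converted to invariance under $H_1$ through the bijection $\sigma'|_{H_1}:H_1\to H_2$.
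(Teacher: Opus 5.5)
Your proposal is correct and is exactly the argument the paper has in mind when it calls the lemma ``immediate'' (no proof is written there): transporting forms via $(w,u)\mapsto\lambda(\phi^{-1}(w),\varphi^{-1}(u))$ gives a linear bijection $\mathrm{B}_{H_1}(\pi\times\omega_1,\C)\to\mathrm{B}_{H_2}(\pi^\vee\times\omega_2,\C)$. Your use of $\sigma'|_{H_1}:H_1\to H_2$ to turn $H_2$-invariance into $H_1$-invariance is correct.
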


Theorem \ref{thmgk} and Lemma \ref{thmgk2} yield the following direct consequence, which we refer to as the \emph{vector-valued Gelfand–Kazhdan criterion}.

\begin{thmt}\label{thm:main2}
Under the assumptions of Theorem \ref{thmgk} and Lemma \ref{thmgk2}, both $(G, \omega_1)$ and $(G, \omega_2)$ are Gelfand pairs. 
\end{thmt}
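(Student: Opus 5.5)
The plan is to combine Theorem \ref{thmgk} with Lemma \ref{thmgk2} by a short counting argument on nonnegative integers. Fix an irreducible admissible smooth representation $\pi$ of $G$, and write
\[
a(\pi):=\dim \mathrm{B}_{H_1}(\pi\times \omega_1, \C), \qquad b(\pi):=\dim \mathrm{B}_{H_2}(\pi^\vee\times \omega_2, \C),
\]
each a priori in $\{0,1,2,\dots\}\cup\{\infty\}$. Theorem \ref{thmgk} gives $a(\pi)\cdot b(\pi)\le 1$, where the product is read with the convention that it is infinite if one factor is infinite and the other is nonzero. Lemma \ref{thmgk2} gives $a(\pi)=b(\pi)$. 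Hence $a(\pi)^2\le 1$, so $a(\pi)\le 1$. Since $\pi$ is arbitrary, $(G,\omega_1)$ is a Gelfand pair.

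For $(G,\omega_2)$, take an arbitrary irreducible admissible smooth representation $\tau$ of $G$. We need $\dim \mathrm{B}_{H_2}(\tau\times\omega_2,\C)\le 1$, and the idea is to apply the previous step to $\pi:=\tau^\vee$. This requires two standard facts. First, $\tau^\vee$ is again irreducible admissible smooth; in the real case this is the Casselman–Wallach contragredient. Second, $(\tau^\vee)^\vee\cong\tau$ topologically. Granting these, we get
\[
\dim \mathrm{B}_{H_2}(\tau\times \omega_2,\C)=b(\tau^\vee)=a(\tau^\vee)\le 1,
\]
so $(G,\omega_2)$ is a Gelfand pair as well.

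The only step that needs care is the treatment of $\pi^\vee$. In the totally disconnected case, admissibility gives $\pi^{\vee\vee}=\pi$ directly. In the real reductive case, $\pi^\vee$ should be understood as the Casselman–Wallach globalization of the contragredient Harish-Chandra module, and reflexivity then follows from the Casselman–Wallach equivalence of categories. The infinite-dimensional case is not a real obstacle, because the product convention above already excludes $a(\pi)=b(\pi)=\infty$. Alternatively, one can argue that if $a(\pi)\ge 2$ then $b(\pi)=a(\pi)\ge 2$, which contradicts Theorem \ref{thmgk}.
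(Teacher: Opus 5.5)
Your proposal is correct and matches the paper, which states the result as a direct consequence without further proof: Theorem~\ref{thmgk} gives $a(\pi)\,b(\pi)\le 1$ and Lemma~\ref{thmgk2} gives $a(\pi)=b(\pi)$, so both are at most $1$, and for $(G,\omega_2)$ you apply this to $\pi=\tau^\vee$ and use $\tau\cong(\tau^\vee)^\vee$. Your handling of the infinite-dimensional case and of the irreducibility and reflexivity of the (Casselman--Wallach) contragredient supplies the only details the paper leaves implicit.
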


To illustrate the utility of the vector-valued Gelfand–Kazhdan criterion, we apply it to prove the following multiplicity-one property for the local Asai Rankin–Selberg periods (see \cite{F, K, BP}).

\begin{thmt}\label{thm:main3} 
Let $n$ be a positive integer, and let $\rk'\,/\,\rk$ be a quadratic extension of local fields of characteristic zero. Then $(\GL_n(\rk'), \mathrm{C}^{\varsigma}(\rk^{1\times n}))$ is a Gelfand pair, where $\mathrm{C}^\varsigma(\rk^{1 \times n})$ is the space of complex-valued Schwartz functions on $\rk^{1 \times n}$ (the space of row vectors), to be viewed as a representation of $\GL_n(\rk)$ by
\begin{eqnarray}\label{introeq1}
(h.\phi)(x)=\phi(x h),\quad h\in \GL_n(\rk), \phi\in\mathrm{C}^\varsigma(\rk^{1 \times n}), x\in \rk^{1\times n}.
\end{eqnarray}
\end{thmt}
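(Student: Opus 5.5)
We apply Theorem \ref{thm:main2} with $G=\GL_n(\rk')$, $H_1=H_2=H:=\GL_n(\rk)$, and $\omega_1=\mathrm C^\varsigma(\rk^{1\times n})$ with the action \eqref{introeq1}. For $\omega_2$ we take the same space, possibly twisted by a character of $H$ as explained below.

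\textbf{Choice of the data in Lemma \ref{thmgk2}.} Take $\sigma'(g)=g^{-t}$. By the classical theorem of Gelfand–Kazhdan (and its archimedean analogue), $\pi^\vee\cong\pi\circ\sigma'$ for every irreducible admissible smooth $\pi$ of $\GL_n(\rk')$, so $\sigma'$ is a Chevalley automorphism. It clearly satisfies $\sigma'(H)=H$. For $\varphi$ we use the Fourier transform
\[
(\mathcal F\phi)(y)=\int_{\rk^{1\times n}}\phi(x)\psi(xy^t)\,\od x
\]
with respect to a nontrivial additive character $\psi$ of $\rk$. A change of variables gives
\[
\mathcal F(h.\phi)(y)=\abs{\det h}_\rk^{-1}(\mathcal F\phi)(yh^{-t}),
\]
so $\mathcal F$ intertwines $\omega_1$, twisted by $\sigma'$, with $\omega_1\otimes\abs{\det}_\rk^{-1}$. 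We therefore put $\omega_2:=\omega_1\otimes\abs{\det}^{-1}_\rk$, or adjust the twist, so that $\varphi=\mathcal F$ is equivariant in the sense of Lemma \ref{thmgk2}.

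The twist is harmless for the final conclusion. Indeed, $\abs{\det h}_\rk=\abs{\det h}_{\rk'}^{1/2}$ for $h\in H$, so the twist extends to a character $\chi$ of $G$. Hence $\mathrm B_H(\pi\times\omega_2,\C)=\mathrm B_H((\pi\otimes\chi)\times\omega_1,\C)$, and $\pi\otimes\chi$ runs over all irreducible admissible smooth representations as $\pi$ does. Thus the Gelfand property of $(G,\omega_2)$ is equivalent to that of $(G,\omega_1)$, and Theorem \ref{thm:main2} yields Theorem \ref{thm:main3} once the hypothesis of Theorem \ref{thmgk} is verified.

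\textbf{Verifying the hypothesis of Theorem \ref{thmgk}.} This is the main obstacle. For the anti-automorphism take $\sigma(g)=\bar g^{\,t}$, where the bar is the Galois conjugation of $\rk'/\rk$; it fixes $H$ pointwise up to transposition. For $\sigma_1,\sigma_2$ take the identity maps, composed with whatever normalization is forced by the twist in $\omega_2$. By the Schwartz kernel theorem, $H\times H$-invariant trilinear forms on $\mathrm D^\varsigma(G)\times\omega_1\times\omega_2$ correspond to tempered generalized functions $f$ on $X:=G\times\rk^{1\times n}\times\rk^{1\times n}$ satisfying
\[
f(h_1gh_2^{-1},\,xh_1^{-1},\,yh_2^{-1})=\chi'(h_1,h_2)\,f(g,x,y)
\]
for an explicit character $\chi'$ of $H\times H$. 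We must show every such $f$ is invariant under $(g,x,y)\mapsto(\bar g^{\,t},y,x)$. We argue by contradiction, assuming there is an $f$ that is anti-invariant.

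First, use the map $g\mapsto s=g\bar g^{-1}$, which identifies $G/H$ with the symmetric space $\{s:s\bar s=1\}$, together with Frobenius reciprocity to remove one copy of $H$. This reduces the problem to $H$-conjugation-equivariant generalized functions in the variables $(s,x,y)$, where the involution becomes an explicit transpose-type map swapping $x$ and $y$. Then apply Bernstein's localization principle over the categorical quotient, followed by Harish-Chandra descent at semisimple points. The centralizers are products of smaller groups of the same or unitary type, so by induction on $n$ we are reduced to $f$ supported on the nilpotent cone times the locus where the vector variables are degenerate.

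The final step is where the real work lies, and it follows the strategy of Aizenbud–Gourevitch–Rallis–Schiffmann. Apply partial Fourier transforms in $x$, in $y$, and in the tangent directions of the symmetric space. These preserve the invariance and anti-invariance properties and again force support on the nilpotent locus. Homogeneity considerations (Weil/Heisenberg-type representations in the archimedean case) then show that a nonzero anti-invariant generalized function cannot have this support. The delicate point is to check that the character $\chi'$ coming from the twist is compatible with the homogeneity degrees that arise; this is what I would verify first.
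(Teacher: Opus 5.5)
There is a genuine gap. Your treatment of Lemma \ref{thmgk2} is fine: you take $\sigma'(g)=g^{-t}$ and $\varphi=\mathcal F$, although the twist that makes $\mathcal F$ equivariant is $\omega_2=\omega_1\otimes\abs{\det}_{\rk}$, not $\abs{\det}_{\rk}^{-1}$. The problem is the data you choose for Theorem \ref{thmgk}: they do not satisfy its hypothesis, so the invariance you set out to prove is false. There are two separate problems.

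First, the anti-automorphism must be the plain transpose $g\mapsto g^t$, not $g\mapsto \bar g^{\,t}$. Already for $n=1$ this fails. Let $F$ be a smooth function on the norm-one group $\{z\in(\rk')^{\times}: z\bar z=1\}$, supported near some $z_0$ with $z_0^{-1}\notin\mathrm{supp}\,F$, and set $H=\GL_n(\rk)$. Then $c(\eta,\phi_1,\phi_2)=\langle F(g\bar g^{-1}),\eta\rangle\,\phi_1(0)\int_{\rk}\phi_2(x)\,\od x$ is an $H\times H$-invariant separately continuous form on $\mathrm D^\varsigma(G)\times\omega_1\times\omega_2$. Composing it with any switch built on $g\mapsto\bar g$ turns $F(z)$ into $F(z^{-1})$, whose support is disjoint from that of $F$. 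Hence no choice of $\sigma_1,\sigma_2$ can rescue $\sigma(g)=\bar g^{\,t}$. For general $n$, use a function of the characteristic polynomial of $g\bar g^{-1}$; the map $g\mapsto\bar g^{\,t}$ replaces it by the characteristic polynomial of the inverse.

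Second, even with $\sigma(g)=g^t$, the maps $\sigma_1,\sigma_2$ cannot be identities (or scalars). Since $\sigma(h_1gh_2^{-1})=h_2^{-t}\sigma(g)h_1^{t}$, the switch respects the $H\times H$-action only if $\sigma_i(h.u)=h^{-t}.\sigma_i(u)$, and the identity does not do this. With $F\equiv 1$ the form above becomes $\langle 1,\eta\rangle\,\phi_1(0)\int\phi_2$, which is visibly not symmetric under $\phi_1\leftrightarrow\phi_2$, even up to a scalar. In your realization the correct choice is $\sigma_1=\sigma_2=\mathcal F$ (suitably normalized), the same map as $\varphi$.

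The paper avoids all of this by working with densities. It takes $\omega_1=\mathrm D^\varsigma(\rk^{n\times 1})$ with left multiplication, $\omega_2=\mathrm D^\varsigma(\rk^{1\times n})$ with right multiplication, $\sigma(g)=g^t$, and $\sigma_1$ the push-forward under transpose. Then invariant forms are honestly invariant generalized functions $f(g,u,v)$, with no character $\chi'$. The switch is the geometric map $(g,u,v)\mapsto(g^t,v^t,u^t)$, and the required statement is exactly Proposition \ref{prop:group}.

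That proposition is not proved by redoing nilpotent-cone, partial Fourier transform and homogeneity arguments, as your last step proposes. It is proved by generalized Harish-Chandra descent:
\begin{itemize}
\item one reduces to normal $\breve{\mathbb H}$-semisimple $x$;
\item one identifies the normal space with $\mathfrak{gl}_n(\rk)_s$;
\item one uses that the stabilizers are fiber products of groups $\breve{\GL}_{n_i}(\rk_i)$.
\end{itemize}
Consequently every descendant is directly an instance of the known Lie algebra theorem (Proposition \ref{prop:algebra}, due to Aizenbud--Gourevitch--Rallis--Schiffmann, Aizenbud--Gourevitch and Sun--Zhu). No induction on $n$ and no unitary-type centralizers are needed.

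Finally, the Fourier transform $\mathrm D^\varsigma(\rk^{n\times 1})\to\mathrm C^\varsigma(\rk^{1\times n})$ is $\GL_n(\rk)$-equivariant on the nose, so no modulus twist ever appears. Since Theorem \ref{thm:main2} already gives the Gelfand property for $(G,\omega_1)$ itself, your paragraph on extending the twist to a character of $G$ is unnecessary.
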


\begin{remt}
Theorem \ref{thm:main3} remains valid when $\rk'$ is replaced by the split extension $\rk \times \rk$. In the split case, Theorem \ref{thm:main3} is known as the uniqueness of Fourier–Jacobi periods and was proved in \cite{Sun} for the non-archimedean case and in \cite{SZ} for the archimedean case. However, the Gelfand–Kazhdan criterion used in \cite{Sun, SZ} does not apply to Theorem \ref{thm:main3}.
\end{remt}

For possible future applications, we also prove the following result concerning the disjointness of local periods, which generalizes \cite[Theorem 2.3(b)]{SZ2}.

\begin{prpt}\label{prpgk}
Suppose that every $H_1\times H_2$-invariant separately continuous trilinear form
\[
\mathrm D^\varsigma(G)\times \omega_1\times \omega_2 \rightarrow \C
\]
is zero. Then for every irreducible admissible smooth representation $\pi$ of $G$,
\[
\dim \mathrm{B}_{H_1}(\pi\times \omega_1, \C) \cdot \dim \mathrm{B}_{H_2}(\pi^\vee \times \omega_2, \C)=0.
\]
\end{prpt}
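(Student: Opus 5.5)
We argue by contraposition: given nonzero $\lambda_1\in \mathrm B_{H_1}(\pi\times\omega_1,\C)$ and $\lambda_2\in \mathrm B_{H_2}(\pi^\vee\times\omega_2,\C)$, we build a nonzero $H_1\times H_2$-invariant separately continuous trilinear form on $\mathrm D^\varsigma(G)\times\omega_1\times\omega_2$. The construction uses matrix coefficients. The space $\mathrm D^\varsigma(G)$ acts on $\pi$ and on $\pi^\vee$ by integration, $\pi(\eta)v=\int_G \pi(g)v\,\od\eta(g)$; this converges and is continuous for Casselman–Wallach representations (moderate growth) and trivially in the totally disconnected case. For $u_1\in\omega_1$ we let $\lambda_{1,u_1}:=\lambda_1(\cdot,u_1)$, a continuous linear functional on $\pi$, i.e.\ an element of the algebraic/continuous dual of $\pi$. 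We then define
\[
T(\eta,u_1,u_2):=\lambda_2\bigl(\pi^\vee(\check\eta)\lambda_{1,u_1},\,u_2\bigr),
\]
where $\check\eta$ is a suitable involuted density. Equivalently, we set $T(\eta,u_1,u_2)=\la \lambda_{1,u_1},\pi(\eta)\,\cdot\,\ra$ paired against $\lambda_{2,u_2}$. More precisely, the key point is that $\pi(\eta)$ maps the full dual $\pi^*$ (or a large completion of $\pi$) into $\pi^\vee$, resp.\ into $\pi$, because Schwartz densities smooth out distribution vectors.

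The steps are as follows. First, we verify that convolution by $\eta\in\mathrm D^\varsigma(G)$ sends the space of continuous functionals on $\pi$ (generalized vectors $\pi^{-\infty}$ of $\pi^\vee$) into the smooth vectors $\pi^\vee$, continuously. In the archimedean case this is the standard fact that $\mathrm D^\varsigma(G)$ maps $(\pi^\vee)^{-\infty}$ to the Casselman–Wallach space $\pi^\vee$. In the $p$-adic case it follows from admissibility, since $\pi(\eta)$ has image in a finite-dimensional space of $K$-fixed vectors. Second, we define $T(\eta,u_1,u_2)=\lambda_2(\pi^\vee(\eta)\lambda_{1,u_1},u_2)$ and check the required properties. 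For invariance, left translation of $\eta$ by $h_1\in H_1$ is absorbed by $H_1$-invariance of $\lambda_1$, with the appropriate left/right conventions, and right translation by $h_2\in H_2$ is absorbed by $H_2$-invariance of $\lambda_2$. Separate continuity follows from the continuity in the first step together with that of $\lambda_1$ and $\lambda_2$.

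Third, we must show $T\neq 0$, and this is the main obstacle. The hypothesis of Proposition \ref{prpgk} then forces $T=0$, giving the contradiction. We choose $u_1$ with $\lambda_{1,u_1}\neq 0$ and $u_2$ with $\lambda_{2,u_2}\neq0$. The set of $\pi^\vee(\eta)\lambda_{1,u_1}$ with $\eta\in\mathrm D^\varsigma(G)$ is a nonzero $G$-stable subspace of $\pi^\vee$: it is nonzero by an approximate identity argument, since $\pi^\vee(\eta_k)\lambda\to\lambda$ weakly. By irreducibility of $\pi^\vee$ this subspace is dense; in the $p$-adic case it is all of $\pi^\vee$. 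The continuous functional $\lambda_{2,u_2}$ is nonzero on $\pi^\vee$, hence cannot vanish on a dense subspace. Therefore some $\eta$ gives $T(\eta,u_1,u_2)\neq0$. The delicate points are the density claim in the archimedean case, where we use that $\pi^\vee$ is irreducible as a Casselman–Wallach representation so that closed invariant subspaces are trivial, and the precise justification of the smoothing in the first step.
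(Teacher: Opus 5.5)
Your proposal is correct and is essentially the paper's argument. The paper's form $c_{\lambda_1\otimes\lambda_2}(\eta,v_1,v_2)=\langle\lambda_1|_{v_1},\eta.(\lambda_2|_{v_2})\rangle$ smooths $\lambda_2|_{v_2}\in(\pi^\vee)'$ into $\pi$ and pairs it with $\lambda_1|_{v_1}$; this coincides with your $T$ built from $\check\eta$. The $\check\eta$ version is the one you need: $\lambda_2(\pi^\vee(\eta)\lambda_{1,u_1},u_2)$ without the inversion is invariant for $H_1$ acting by right and $H_2$ by left translations, not the required sides.

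For nonvanishing, the paper uses Lemma \ref{lemsch00}, namely $\mathrm D^\varsigma(G).\lambda=\pi$ exactly via algebraic irreducibility as a $\mathrm D^\varsigma(G)$-module. Your argument via an approximate identity and topological irreducibility of $\pi^\vee$ gives only density, which is weaker but suffices here.
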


\begin{remt}In \cite{Sun2}, the second named author formulates and proves a  version of Gelfand-Kazhdan criterion  in the setting of real Jacobi groups.
By the argument of this note,  Theorem \ref{thmgk}, Lemma \ref{thmgk2}, Theorem  \ref{thm:main2}, and Proposition   \ref{prpgk} remain valid when $G$ is replaced by  a real Jacobi group  and the Casselman-Wallach representations are replaced by the Casselman-Wallach
$\psi$-representations  (in the sense of  \cite{Sun2}). 
\end{remt}
\section{Proof of Theorem \ref{thmgk} and Proposition \ref{prpgk}}

This section is devoted to proving Theorem \ref{thmgk} and Proposition \ref{prpgk}. 
We begin with the following elementary result from linear algebra, whose proof is omitted.

\begin{lemt}\label{leme1}
Let $E_1$ and $E_2$ be vector spaces over a field $\mathrm{k}_0$, and let $\lambda_1, \lambda_2: E_1\times E_2 \rightarrow \mathrm{k}_0$ be bilinear forms. Suppose that for all $v_1\in E_1$ and $v_2\in E_2$,
\[
  \lambda_1(v_1, v_2)=0 \quad \text{implies}\quad \lambda_2(v_1, v_2)=0.
\]
Then there exists a scalar $a\in \mathrm{k}_0$ such that $\lambda_2 = a \lambda_1$.
\end{lemt}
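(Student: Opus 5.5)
We need to prove Lemma \ref{leme1}. The idea is to pass from bilinear forms to linear maps and then reduce to the classical fact that a linear functional vanishing on the kernel of another is a scalar multiple of it.

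First dispose of the degenerate case. If $\lambda_1=0$, then every pair $(v_1,v_2)$ satisfies the hypothesis, so $\lambda_2=0$ and any $a$ works. Assume from now on that $\lambda_1\neq 0$.

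The plan is to work one variable at a time. For each $v_1\in E_1$, consider the linear functionals $\ell_i:=\lambda_i(v_1,\cdot)$ on $E_2$. The hypothesis says exactly that $\ker \ell_1\subseteq \ker \ell_2$.
\begin{itemize}
\item If $\ell_1\neq 0$, pick $w$ with $\ell_1(w)=1$. Then $E_2=\ker\ell_1\oplus \mathrm{k}_0 w$, so $\ell_2=\ell_2(w)\,\ell_1$.
\item If $\ell_1=0$, then $\ker\ell_1=E_2$, so $\ell_2=0$.
\end{itemize}
So for each $v_1$ there is a scalar $a(v_1)$ with $\lambda_2(v_1,\cdot)=a(v_1)\lambda_1(v_1,\cdot)$. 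This scalar is uniquely determined whenever $v_1\notin N:=\{v_1:\lambda_1(v_1,\cdot)=0\}$.

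The main step is to show that $a(v_1)$ is independent of $v_1$ on $E_1\setminus N$; this is a standard linear independence argument. Take $v,v'\notin N$.

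Suppose first that $\lambda_1(v,\cdot)$ and $\lambda_1(v',\cdot)$ are linearly independent. Then $v+v'\notin N$. Linearity of $\lambda_2$ in the first variable gives
\[
a(v+v')\bigl(\lambda_1(v,\cdot)+\lambda_1(v',\cdot)\bigr)=a(v)\lambda_1(v,\cdot)+a(v')\lambda_1(v',\cdot).
\]
Comparing coefficients yields $a(v)=a(v+v')=a(v')$.

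Suppose instead that they are dependent, say $\lambda_1(v',\cdot)=c\,\lambda_1(v,\cdot)$ with $c\neq0$. Then $v'-cv\in N$. By the first step, $\lambda_1(v'-cv,\cdot)=0$ forces $\lambda_2(v'-cv,\cdot)=0$. Hence
\[
a(v')\lambda_1(v',\cdot)=\lambda_2(v',\cdot)=c\,\lambda_2(v,\cdot)=c\,a(v)\lambda_1(v,\cdot)=a(v)\lambda_1(v',\cdot),
\]
so $a(v')=a(v)$.

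Let $a$ denote this common value. Then $\lambda_2(v_1,\cdot)=a\lambda_1(v_1,\cdot)$ for all $v_1\notin N$. For $v_1\in N$ both sides vanish by the first step. Hence $\lambda_2=a\lambda_1$.

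The only mildly delicate point is handling the dependent versus independent cases when comparing scalars. No topology is involved, which is consistent with the paper omitting the proof.
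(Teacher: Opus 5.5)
Your proof is correct and complete. For each fixed $v_1$, kernel containment gives $\lambda_2(v_1,\cdot)=a(v_1)\lambda_1(v_1,\cdot)$, and your independent/dependent case split correctly shows that $a(v_1)$ is constant on $E_1\setminus N$ over an arbitrary field. The paper omits this proof as elementary linear algebra, and yours is the standard argument. Equivalently, $v_1\mapsto\lambda_2(v_1,\cdot)$ factors through the image of $v_1\mapsto\lambda_1(v_1,\cdot)$ in $E_2^*$ via a linear map sending each vector to a multiple of itself, and such a map must be a scalar.
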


Every admissible smooth representation $\pi$ of $G$ carries an (abstract) $\mathrm D^\varsigma(G)$-module structure given by
\[
 \eta.u:=\int_G f(g) g.u\,\od \!g, \quad \eta=f \od\!g, \ u\in \pi,
\]
where $f\in \mathrm{C}^{\varsigma}(G)$ and $\od\!g$ is a left-invariant Haar measure on $G$. Let $\pi'$ denote the space of continuous linear functionals on $\pi$, equipped with the strong topology (i.e., the topology of uniform convergence on bounded sets). It is again a $\mathrm D^\varsigma(G)$-module, with the module structure defined by 
\[
\langle \eta.\lambda ,u\rangle =\langle \lambda, \eta^\vee.u\rangle,\quad \eta\in \mathrm D^\varsigma(G), \,\lambda\in \pi',\ u\in \pi,
\]
where $\eta^\vee$ denotes the pushforward of $\eta$ through the inversion map $G\rightarrow G,\ x\mapsto x^{-1}$. 

Set $\pi^{-\infty}:=(\pi^\vee)'$. Then $\pi$ is identified with a $\mathrm D^\varsigma(G)$-submodule of $\pi^{-\infty}$ via the obvious injective linear map $\pi\rightarrow \pi^{-\infty}$. It is known that (cf. \cite[Lemma 3.5]{SZ2})
\begin{equation}\label{dpi}
\mathrm D^\varsigma(G).\pi^{-\infty}=\mathrm D^\varsigma(G).\pi=\pi,
\end{equation}
and that the map
\begin{equation}\label{sepc}
  \mathrm D^\varsigma(G)\times \pi^{-\infty}\rightarrow \pi \quad\text{is separately continuous.}
\end{equation}

\begin{lemt}\label{lemsch00}
Let $\pi$ be an irreducible admissible smooth representation of $G$. Then for every nonzero $\lambda\in \pi^{-\infty}$, 
\[
 \mathrm D^\varsigma(G).\lambda=\pi.
\]
\end{lemt}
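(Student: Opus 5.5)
We will show that $\mathrm D^\varsigma(G).\lambda$ is a nonzero $G$-stable subspace of $\pi$; irreducibility then forces it to be all of $\pi$.

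\emph{Containment.} By \eqref{dpi}, $\mathrm D^\varsigma(G).\lambda\subseteq \mathrm D^\varsigma(G).\pi^{-\infty}=\pi$.

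\emph{$G$-stability.} We have $g.(\eta.\lambda)=(g.\eta).\lambda$, where $g.\eta$ is the left translate of $\eta$, and $\mathrm D^\varsigma(G)$ is stable under left translations. So $\mathrm D^\varsigma(G).\lambda$ is a $G$-stable linear subspace of $\pi$; it is also stable under $\mathrm D^\varsigma(G)$ itself.

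\emph{Nonvanishing.} Choose an approximate identity $\eta_j$ in $\mathrm D^\varsigma(G)$: compactly supported smooth, or locally constant, probability densities concentrating at $1$. For each $v\in\pi^\vee$,
\[
\langle \eta_j.\lambda, v\rangle=\langle \lambda,\eta_j^\vee.v\rangle ,
\qquad \eta_j^\vee.v\to v \text{ in } \pi^\vee ,
\]
since $\pi^\vee$ is a smooth representation. Hence $\eta_j.\lambda\to\lambda$ weakly, and if all $\eta.\lambda$ vanished we would get $\lambda=0$, a contradiction. In the totally disconnected case this is even simpler: take $v$ with $\langle\lambda,v\rangle\neq0$, fix $v$ under a compact open $K$, and let $\eta$ be the normalized Haar measure of $K$. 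Then $\langle\eta.\lambda,v\rangle=\langle\lambda,v\rangle\neq0$.

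\emph{Irreducibility step.} In the totally disconnected case, irreducibility is algebraic, so a nonzero $G$-stable subspace is all of $\pi$ and we are done.

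In the archimedean case, irreducibility means only that there are no proper \emph{closed} invariant subspaces, and $\mathrm D^\varsigma(G).\lambda$ is not obviously closed. This is the main obstacle. I would handle it in one of two ways.

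\begin{itemize}
\item[(a)] Use the Casselman--Wallach theory: an irreducible Casselman--Wallach $\pi$ is an algebraically simple $\mathrm D^\varsigma(G)$-module. This is the standard fact that $\mathrm D^\varsigma(G).u=\pi$ for every nonzero $u\in\pi$, a consequence of the Casselman--Wallach globalization and the Dixmier--Malliavin type result $\mathrm D^\varsigma(G).\pi=\pi$. Granting it, pick $\eta_0$ with $u:=\eta_0.\lambda\neq0$. Then
\[
\pi=\mathrm D^\varsigma(G).u=(\mathrm D^\varsigma(G)*\eta_0).\lambda\subseteq\mathrm D^\varsigma(G).\lambda .
\]
\item[(b)] Argue directly. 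The map $\eta\mapsto\eta.\lambda$ from $\mathrm D^\varsigma(G)$ to $\pi$ is continuous by \eqref{sepc} and $G$-equivariant. Its image is a quotient of the Fréchet space $\mathrm D^\varsigma(G)$, and $\mathrm D^\varsigma(G)$ is of moderate growth. One would show that the closure of the image is $\pi$, and that the Casselman--Wallach property (automatic continuity: any equivariant map from a moderate-growth Fréchet representation with dense image onto a CW module is surjective) gives surjectivity.
\end{itemize}

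I would go with (a), citing the algebraic irreducibility of CW representations as a $\mathrm D^\varsigma(G)$-module (cf. \cite{SZ2}).
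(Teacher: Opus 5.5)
Your argument is correct and is essentially the paper's: $\mathrm D^\varsigma(G).\lambda$ is a nonzero $\mathrm D^\varsigma(G)$-submodule of $\pi$ by \eqref{dpi}, and $\pi$ is irreducible as an abstract $\mathrm D^\varsigma(G)$-module, which forces $\mathrm D^\varsigma(G).\lambda=\pi$ exactly as in your option (a); the paper cites Duflo for this algebraic irreducibility in the archimedean case and Bernstein--Zelevinsky in the totally disconnected case. Your approximate-identity argument for nonvanishing is a harmless variant of what the paper obtains from $\mathrm D^\varsigma(G).\pi^\vee=\pi^\vee$.
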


\begin{proof}
The equalities in \eqref{dpi} imply that $\mathrm D^\varsigma(G).\lambda$ is a nonzero $\mathrm D^\varsigma(G)$-submodule of $\pi$. Note that the representation $\pi$ is irreducible as an (abstract) $\mathrm D^\varsigma(G)$-module (see \cite[Corollary 3.1.5]{Du} in the real reductive group case and \cite{BZ} in the totally disconnected case). The lemma then follows.  
\end{proof}

\begin{lemt}\label{lemsch}
Let $\pi$ be an irreducible admissible smooth representation of $G$, and let $\phi, \phi'$ be two continuous linear functionals on $\pi^\vee$. Suppose that for all $\eta\in \mathrm D^{\varsigma}(G)$,
\[
 \eta.\phi=0 \quad \text{implies}\quad \eta.\phi'=0.
\]
Then $\phi'$ is a scalar multiple of $\phi$.
\end{lemt}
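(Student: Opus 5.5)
If $\phi=0$, then $\eta.\phi=0$ for every $\eta$, so the hypothesis gives $\eta.\phi'=0$ for all $\eta\in \mathrm D^\varsigma(G)$. We will show that a nonzero element of $\pi^{-\infty}$ cannot be annihilated by all of $\mathrm D^\varsigma(G)$, so $\phi'=0$ and the claim holds. Indeed, by Lemma \ref{lemsch00}, if $\phi'\neq 0$ then $\mathrm D^\varsigma(G).\phi'=\pi\neq 0$. (For $\pi=0$ everything is trivial.) So we may assume $\phi\neq 0$.

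Now take $\phi\neq 0$. By Lemma \ref{lemsch00}, the map $\eta\mapsto \eta.\phi$ from $\mathrm D^\varsigma(G)$ to $\pi$ is surjective. The hypothesis says its kernel is contained in the kernel of $\eta\mapsto \eta.\phi'$. Hence there is a well-defined linear map $T:\pi\to\pi$ with $T(\eta.\phi)=\eta.\phi'$. It commutes with the $\mathrm D^\varsigma(G)$-action, since $\eta_1.(\eta.\phi)=(\eta_1*\eta).\phi$, and similarly for $\phi'$. So $T$ is an endomorphism of $\pi$ as an abstract $\mathrm D^\varsigma(G)$-module.

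We next show $T$ is a scalar by an abstract Schur lemma. $\pi$ is irreducible as a $\mathrm D^\varsigma(G)$-module (as used in the proof of Lemma \ref{lemsch00}), so $\mathrm{End}(\pi)$ is a division algebra over $\C$. Admissibility gives a nonzero finite-dimensional subspace stable under $T$: in the totally disconnected case, $\pi^K$ for an open compact subgroup $K$, since $T$ commutes with the idempotent $e_K\in \mathrm D^\varsigma(G)$. Thus $T$ has an eigenvalue $a$, and $T-a$ is a non-invertible element of a division algebra, hence $T=a$. In the real case we take a $K$-isotypic component, which is finite dimensional, and use a $K$-finite approximate idempotent in $\mathrm D^\varsigma(G)$ that projects onto it. Alternatively, we invoke the countable-dimension argument, or Dixmier's lemma on the Harish-Chandra module, with the algebraic Schur lemma. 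This step, making sure a purely algebraic endomorphism is scalar, is the main technical point.

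Then $\eta.(\phi'-a\phi)=0$ for all $\eta\in \mathrm D^\varsigma(G)$. By Lemma \ref{lemsch00} again, as in the case $\phi=0$, this forces $\phi'=a\phi$.
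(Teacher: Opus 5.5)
Your proof is correct in substance, but it reaches the key step by a different route from the paper. Both arguments build $T\colon \eta.\phi\mapsto \eta.\phi'$ and finish with Lemma \ref{lemsch00}. The paper then proves that $T$ is continuous and quotes the topological Schur lemma for irreducible admissible representations. Continuity comes from the map $\eta\mapsto \eta.\phi$ from $\mathrm D^\varsigma(G)$ to $\pi$: it is continuous by \eqref{sepc} and surjective by Lemma \ref{lemsch00}, hence open by the open mapping theorem.

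You use no topology on $T$. From three facts alone (that $T$ commutes with every $\eta\in\mathrm D^\varsigma(G)$, that $\pi$ is algebraically irreducible, and that $\pi$ is admissible), you find an eigenvector of $T$ in a finite-dimensional $T$-stable subspace and conclude $\ker(T-a)=\pi$. This avoids the open mapping theorem and in effect proves a stronger Schur lemma: every abstract $\mathrm D^\varsigma(G)$-endomorphism of $\pi$ is scalar. The cost is that you must exhibit a nonzero finite-dimensional $T$-stable subspace. In the totally disconnected case you do this correctly: $T(\pi^K)=T(e_K.\pi)=e_K.T(\pi)\subset\pi^K$.

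In the archimedean case your justification needs to be made exact. An element of $\mathrm D^\varsigma(G)$ that only approximates the isotypic projection does not by itself show that $T$ preserves the isotypic component. Passing $T$ through a limit would require exactly the continuity of $T$ that you are avoiding.

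The exact argument is easy. First, $T$ is $G$-equivariant, because $g.(\eta.\phi)=(\delta_g*\eta).\phi$ with $\delta_g*\eta\in\mathrm D^\varsigma(G)$, where $\delta_g$ is the Dirac measure at $g$. Hence $T$ is $K$-equivariant for a maximal compact subgroup $K$. Now take $v$ in the $\tau$-isotypic component $\pi(\tau)$. A $K$-equivariant linear map, even a discontinuous one, sends the finite-dimensional span of $K.v$ onto a $K$-quotient of it, which is again $\tau$-isotypic. So $T(\pi(\tau))\subset\pi(\tau)$.

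Alternatively, set $\mu_\tau:=\dim\tau\cdot\overline{\chi_\tau(k)}\,\od k$, with $\chi_\tau$ the character of $\tau$ and $\od k$ the normalized Haar measure on $K$. Then $\mu_\tau*\eta\in\mathrm D^\varsigma(G)$, and each image $(\mu_\tau*\eta).\pi\subset\pi(\tau)$ is $T$-stable. By \eqref{dpi} these images span $\pi(\tau)$.

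Since $\pi(\tau)$ is finite-dimensional by admissibility and nonzero for some $\tau$, your eigenvalue argument then goes through. Note also that the ``countable-dimension'' alternative does not apply to $\pi$ itself in the real case, because an infinite-dimensional Fr\'echet space has uncountable algebraic dimension. It works only on the Harish-Chandra module, and you would still need to check that $T$ preserves that module and commutes with the $\mathfrak g$-action.
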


\begin{proof}
Without loss of generality, assume that $\phi\neq 0$ and $\phi'\neq 0$. 
Then, by \eqref{dpi} together with the assumption of the lemma, there is a well-defined $G$-equivariant linear endomorphism 
\[
 \pi\rightarrow \pi, \quad \eta.\phi\mapsto \eta.\phi'.
\]
By using \eqref{sepc}, Lemma \ref{lemsch00}, and the open mapping theorem, we know that this endomorphism is continuous. Schur's lemma (see \cite[Lemme 3.2.9]{Du} and \cite[Proposition 2.11]{BZ}) therefore implies that it is a scalar multiplication. That is, there exists $a\in \mathbb{C}$ such that 
\[
\eta.\phi'=a\cdot (\eta.\phi)\quad \text{for all $\eta\in \mathrm D^{\varsigma}(G)$}.
\]
Applying Lemma \ref{lemsch00} once more, we conclude that $\phi'=a\, \phi$.
\end{proof}

We are now ready to prove Theorem \ref{thmgk}. Retain the notation of Theorem \ref{thmgk}. 
Without loss of generality, we assume that both spaces  
\[
\mathrm{B}_{H_1}( \pi \times  \omega_1, \mathbb{C})\quad\text{and}\quad \mathrm{B}_{H_2}( \pi^\vee \times  \omega_2, \mathbb{C})
\] 
are nonzero. Choose nonzero elements $\lambda_1$ and $\lambda_2$ in these two spaces, respectively.

We define a trilinear form
\begin{equation}\label{defc}
c_{\lambda_1\otimes \lambda_2}:\  \mathrm D^\varsigma(G)\times \omega_1\times \omega_2 \rightarrow \mathbb{C},\quad (\eta, v_1, v_2)\mapsto \langle \lambda_1|_{v_1}, \eta.(\lambda_2|_{v_2})\rangle,
\end{equation}
where $\lambda_1|_{v_1}$ denotes the continuous linear functional 
\[
\pi\rightarrow \mathbb{C}, \quad u\mapsto \lambda_1(u, v_1),
\]
and similar notation will be used without further explanation. 

Note that the trilinear form $c_{\lambda_1\otimes \lambda_2}$ is separately continuous (cf. \cite[Lemma 3.5]{SZ2}) and $H_1\times H_2$-invariant. From the assumption of Theorem \ref{thmgk}, it follows that 
\begin{equation}\label{cinv}
c_{\lambda_1\otimes \lambda_2}\ \text{is invariant under the automorphism \eqref{switch} of}\ \mathrm D^\varsigma(G)\times \omega_1\times \omega_2.
\end{equation} 

Fix $v_2\in \omega_2$. Then for all $\eta\in \mathrm D^\varsigma(G)$,
\begin{eqnarray*}
  &&\eta. (\lambda_2|_{v_2}) =0 \\
  &\Leftrightarrow  & \la \lambda_1|_{v_1}, (\eta'*\eta).(\lambda_2|_{v_2})\ra = 0 \ \textrm{for all $v_1\in \omega_1$ and $\eta'\in \mathrm D^\varsigma(G)$}\quad (\text{by Lemma \ref{lemsch00}})\\
  &\Leftrightarrow & \la \lambda_1|_{\sigma_2(v_2)}, (\sigma(\eta'*\eta)).(\lambda_2|_{\sigma_1(v_1)})\ra = 0 \  \textrm{for all $v_1\in \omega_1$ and $\eta'\in \mathrm D^\varsigma(G)$}\quad (\textrm{by \eqref{cinv}})\\
  &\Leftrightarrow  & (\sigma(\eta))^\vee. (\lambda_1|_{\sigma_2(v_2)})=0\quad (\text{by Lemma \ref{lemsch00}}).
\end{eqnarray*}
Now let $\lambda'_2$ be another nonzero element of $\mathrm{B}_{H_2}( \pi^\vee \times  \omega_2, \mathbb{C})$. By the same argument, for all $\eta\in \mathrm D^\varsigma(G)$,
\[
\eta. (\lambda'_2|_{v_2})=0 \
\Leftrightarrow\ (\sigma(\eta))^\vee. (\lambda_1|_{\sigma_2(v_2)})=0.
\]
Consequently,
\[
\eta. (\lambda'_2|_{v_2}) =0 \
\Leftrightarrow\ \eta. (\lambda_2|_{v_2}) =0.
\]
By Lemma \ref{lemsch}, this implies that $\lambda_2|_{v_2}$ and $\lambda'_2|_{v_2}$ have the same kernel (as linear functionals on $\pi^\vee$). Since $v_2\in \omega_2$ is arbitrary, Lemma \ref{leme1} implies that $\lambda_2'$ is a scalar multiple of $\lambda_2$. Hence the space $\mathrm{B}_{H_1}(\pi\times \omega_1, \mathbb{C})$ is one-dimensional. 

A completely analogous argument shows that the space $\mathrm{B}_{H_2}(\pi^\vee\times \omega_2, \mathbb{C})$ is also one-dimensional. This finishes the proof of Theorem \ref{thmgk}.

Finally, we give a proof of Proposition \ref{prpgk}. Assume by contradiction that there exist nonzero elements 
\[
\lambda_1\in \mathrm{B}_{H_1}( \pi \times  \omega_1, \mathbb{C})\quad \text{and}\quad \lambda_2\in \mathrm{B}_{H_2}( \pi^\vee \times  \omega_2, \mathbb{C}).
\] 
Then the $H_1\times H_2$-invariant separately continuous trilinear form $c_{\lambda_1\otimes \lambda_2}$ (see \eqref{defc}) is nonzero by Lemma \ref{lemsch00}, which yields the desired contradiction.

\section{Proof of Theorem \ref{thm:main3}}
This section is devoted to proving Theorem \ref{thm:main3}. 
Let $N$ be either a Nash group or a totally disconnected, locally compact Hausdorff topological group.
We denote by $\mathrm{C}^\varsigma(N)$ the space of Schwartz functions on $N$. When $N$ is a reductive Nash group, this agrees with the space defined in \eqref{Nash}. We refer the interested reader to \cite{Shi,Sun3,AG0} for generalities on Nash manifolds, Nash groups, and their function spaces.

As before, the space of Schwartz densities on $N$ is defined by 
\[
\mathrm{D}^\varsigma(N)=\mathrm{C}^\varsigma(N)\cdot (\text{a fixed left-invariant Haar measure on $N$}). 
\]
When $N$ is a Nash group, both $\mathrm{C}^\varsigma(N)$ and $\mathrm{D}^\varsigma(N)$ are naturally Fr\'echet spaces.
By a \emph{tempered generalized function} on $N$, we mean a continuous linear functional on $\mathrm{D}^\varsigma(N)$. 

Let $n$ be a positive integer, and let $\rk'/\rk$ be a quadratic extension of local fields of characteristic zero as in Theorem \ref{thm:main3}. 
Recall that $\rk^{1\times n}$ denotes the space of row vectors, and similarly we write $\rk^{n\times 1}$ for the space of column vectors. 
By applying the vector-valued Gelfand--Kazhdan criterion, we will show at the end of this section that Theorem \ref{thm:main3} follows from the result stated below. 

\begin{prpt}\label{prop:group}
Let $f$ be a tempered generalized function on $\mathrm{GL}_n(\rk') \times \rk^{n\times 1} \times \rk^{1\times n}$ such that for all $h_1,h_2\in \mathrm{GL}_n(\rk)$,
\[
f(h_1gh_2^{-1}, h_1u, vh_2^{-1}) = f(g, u ,v), \quad g\in \mathrm{GL}_n(\rk'), \; u\in \rk^{n\times 1}, \; v\in \rk^{1\times n}.
\]
Then 
\[
f(g, u, v) = f(g^t, v^t, u^t).
\] 
\end{prpt}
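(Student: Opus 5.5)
Our plan is to prove Proposition \ref{prop:group} by the standard generalized Harish-Chandra descent, in the style of \cite{AG, AGRS, SZ, Sun}. We first enlarge the symmetry group. Let $\tilde H := (\GL_n(\rk)\times \GL_n(\rk))\rtimes\{1,\tau\}$, where $\tau$ acts on $X:=\GL_n(\rk')\times \rk^{n\times 1}\times \rk^{1\times n}$ by $\tau(g,u,v)=(g^t,v^t,u^t)$ and on $\GL_n(\rk)\times\GL_n(\rk)$ by $(h_1,h_2)\mapsto ((h_2^t)^{-1},(h_1^t)^{-1})$. Let $\chi$ be the character of $\tilde H$ that is trivial on $\GL_n(\rk)^2$ and satisfies $\chi(\tau)=-1$. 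It suffices to show that every $\chi$-equivariant tempered generalized function on $X$ vanishes. Indeed, for $f$ as in Proposition \ref{prop:group}, the function $f-\tau f$ is such an object.

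\textbf{Step 1: reduce to a linearized problem.} We pass to the symmetric-space picture. The map $g\mapsto s=g\bar g^{-1}$ (with $\bar{\ }$ the Galois conjugation) identifies $\GL_n(\rk')/\GL_n(\rk)$ with the symmetric space $S=\{s:\ s\bar s=1\}$. The $\GL_n(\rk)\times\GL_n(\rk)$ problem on $\GL_n(\rk')$ is then handled as a problem for the adjoint action of $\GL_n(\rk)$ on $S$ (together with the vector variables). Concretely, we use Frobenius descent (the Bernstein localization principle, as in \cite{AG}) to rewrite the invariance in terms of the stabilizers of semisimple orbits. The Cayley transform then linearizes the problem near $s=\pm1$, so we land on the tangent space $\mathfrak s=\{x\in\mathfrak{gl}_n(\rk'):\bar x=-x\}\cong\mathfrak{gl}_n(\rk)$ as a $\GL_n(\rk)$-module, with the extra data $\rk^{n\times1}\times\rk^{1\times n}$. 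Since $\mathfrak s=\sqrt{\delta}\,\mathfrak{gl}_n(\rk)$, this is essentially the Fourier--Jacobi situation treated in \cite{Sun, SZ}, now with the involution $x\mapsto x^t$.

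\textbf{Step 2: induction on $n$ via descent.} Near a non-central semisimple element, the centralizer is a product of groups of the form $\GL_{m}(\mathrm{E})$, and the centralized problem is a product of smaller instances of the same problem (either split or quadratic type). Those smaller instances are handled by induction, together with the known split case \cite{Sun, SZ}. This reduces the claim to distributions supported on the set where the semisimple part is central, that is, the nilpotent cone times the vector part after linearization.

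\textbf{Step 3: the nilpotent/vector part.} This is the main obstacle. We would use the Fourier transform on $\mathfrak s\times\rk^{n\times1}\times\rk^{1\times n}$, which commutes with the action of $\tilde H$. Applying it, we obtain that the distribution and its Fourier transform are both supported on the null cone $\{x \text{ nilpotent},\ vx^iu=0 \text{ for all } i\}$. We then apply the homogeneity argument of \cite{AG, SZ}: we use the $\rk^\times$-scaling of the vector variables and the metaplectic/Weil-type homogeneity estimates on each $\GL_n(\rk)$-orbit in the null cone (these orbits are finitely many, and are described by Jordan data together with $(u,v)$). The aim is to show that no nonzero $\chi$-equivariant distribution has this double support property. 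The hardest point is verifying the required dimension/modulus inequality orbit by orbit. Here we would try to mimic \cite{Sun}: we choose an $\mathfrak{sl}_2$-triple attached to each nilpotent orbit and check that the eigenvalue of the Euler vector field on the transversal is less than half the codimension. In the archimedean case this additionally requires the usual control of transversal derivatives, as in \cite{SZ}.
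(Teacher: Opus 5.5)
Your enlarged group $\tilde H$ and character $\chi$ match the paper's $\breve{\mathbb H}$ and $\breve{\eta}$. The gap is in Step~3, which is where the actual proof has to happen. As written it is only a plan, and its main premise is false. The group $\GL_n(\rk)$ does \emph{not} have finitely many orbits on the null cone $\{(x,u,v): x \text{ nilpotent},\ vx^iu=0 \text{ for all } i\}$. Already for $n=2$, consider the $\GL_2(\rk)$-stable locus where $x^2=0$, $\operatorname{rank}x=1$, $u\in\operatorname{Im}x$ and $v$ vanishes on $\ker x$. There the number $vw$, for any $w$ with $xw=u$, is a well-defined invariant. At $x=E_{12}$, $u=(a,0)^t$, $v=(0,b)$ it equals $ab$, which takes every value in $\rk$. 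So the orbit-by-orbit Euler-field/codimension estimate cannot be run as you describe. In addition, Step~2 only descends in the $\GL_n(\rk')$ (or $S$) variable. For the distribution and its Fourier transform it therefore gives support only in $\mathcal N\times\rk^{n\times1}\times\rk^{1\times n}$, where $\mathcal N$ is the nilpotent cone. The extra conditions $vx^iu=0$ do not follow from Step~2 alone. They need a further descent at points with nonzero vector components, i.e.\ an induction for the Lie-algebra problem itself, which you do not supply.

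The missing idea is that none of this has to be redone. After linearization your problem is exactly Proposition~\ref{prop:algebra}: $\GL_n(\rk)$-invariant tempered generalized functions on $\mathfrak{gl}_n(\rk)\times\rk^{n\times1}\times\rk^{1\times n}$ are invariant under $(X,u,v)\mapsto(X^t,v^t,u^t)$. This is already proved in \cite{AGRS} in the non-archimedean case and in \cite{AG,SZ} in the archimedean case, and the paper uses it as a black box. The paper's argument runs as follows:
\begin{enumerate}
\item Apply the generalized Harish-Chandra descent \cite[Theorem 2.2.15]{AG} for $\breve{\mathbb H}$ acting on $\GL_n(\rk')\times\rk^{n\times1}\times\rk^{1\times n}$. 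This reduces the claim to the normal spaces at all $\breve{\mathbb H}$-semisimple $x$.
\item Restrict to normal $x$, i.e.\ $x\gamma(x)=\gamma(x)x$, using \cite[Lemma 4.2]{CS}.
\item Identify $\breve{\mathbb H}_x$ with $\breve{\GL}_n(\rk)_s$, where $s=x\gamma(x)^{-1}$, and identify the normal space with $\mathfrak{gl}_n(\rk)_s$.
\item Use semisimplicity of $s$ to split the problem into factors $\mathfrak{gl}_{n_i}(\rk_i)\times\rk_i^{n_i\times1}\times\rk_i^{1\times n_i}$. Each factor is covered by Proposition~\ref{prop:algebra} over $\rk_i$.
\end{enumerate}
There is no induction on $n$, no split/quadratic case distinction, and no nilpotent-cone analysis.

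Your Step~1 is also loose, though both problems are repairable:
\begin{itemize}
\item The transpose interchanges the two $\GL_n(\rk)$-factors, so it does not descend to a map of $S=\GL_n(\rk')/\GL_n(\rk)$.
\item The $\GL_n(\rk)$-fixed points of $S$ are all $cI$ with $c\bar c=1$, not just $\pm1$.
\end{itemize}
Working with $\breve{\mathbb H}$ on $\GL_n(\rk')$ directly, as the paper does, avoids both issues.
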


Here and throughout, a superscript ``$t$" indicates the transpose of a matrix, and the equalities are understood as equalities of tempered generalized functions. 
Using linearization, we will reduce Proposition \ref{prop:group} to the following result. 

\begin{prpt}\label{prop:algebra}
Let $f$ be a tempered generalized function on $\mathfrak{gl}_n(\rk) \times \rk^{n\times 1} \times \rk^{1\times n}$ such that for all $g\in \mathrm{GL}_n(\rk)$,
\[
f(gXg^{-1}, gu, vg^{-1}) = f(X, u ,v), \quad X\in \mathfrak{gl}_n(\rk), \; u\in \rk^{n\times 1}, \; v\in \rk^{1\times n}.
\]
Then 
\[
f(X, u, v) = f(X^t, v^t, u^t).
\]  
\end{prpt}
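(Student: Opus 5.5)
Proposition \ref{prop:algebra} is the infinitesimal key statement behind the classical multiplicity-one theorem for the pair $(\GL_{n+1},\GL_n)$. It is known: by Aizenbud–Gourevitch–Rallis–Schiffmann in the non-archimedean case, and by Aizenbud–Gourevitch and Sun–Zhu in the archimedean case, the latter for tempered generalized functions. I would therefore primarily cite it. Note that only the base field $\rk$ and the group $\GL_n(\rk)$ occur here; the quadratic extension $\rk'$ has disappeared. If a self-contained argument is wanted, I would follow the standard strategy below.

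\emph{Twisted reformulation.} Let $\tau(X,u,v)=(X^t,v^t,u^t)$ and form $\tilde G=\GL_n(\rk)\rtimes\{1,\tau\}$. Here $\tau$ acts on $\GL_n(\rk)$ by $g\mapsto (g^t)^{-1}$, which is compatible with the action $g.(X,u,v)=(gXg^{-1},gu,vg^{-1})$. Let $\chi$ be the character of $\tilde G$ that is trivial on $\GL_n(\rk)$ and equals $-1$ on $\tau$. Writing $f=\tfrac12(f+f\circ\tau)+\tfrac12(f-f\circ\tau)$, it suffices to show that every $\chi$-equivariant tempered generalized function $f$ vanishes. I would argue by induction on $n$; the case $n=0$ is trivial.

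\emph{Descent and Fourier transform.} First, apply Harish-Chandra descent in the variable $X$. Near a non-central semisimple element $s$, the centralizer of $s$ is a product $\prod_i \GL_{m_i}(E_i)$ with $m_i<n$. The slice representation is again a product of triples $\mathfrak{gl}_{m_i}(E_i)\times E_i^{m_i}\times E_i^{m_i}$, and $\tau$ can be arranged to act on it in the same transpose fashion. The induction hypothesis (in its version over finite extensions $E_i/\rk$) then kills $f$ near such $s$. Hence $\mathrm{supp} f\subset (\mathfrak z+\mathcal N)\times\rk^{n\times1}\times\rk^{1\times n}$, where $\mathfrak z$ is the center and $\mathcal N$ the nilpotent cone; translating by $\mathfrak z$ reduces this to $X\in\mathcal N$. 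Second, observe that the Fourier transform in $X$ (via the trace form) and the partial Fourier transform exchanging $u$ and $v^t$ both commute with the $\tilde G$-action up to $\chi$-compatible signs. The same support conclusion therefore holds for these transforms. A further descent in the $(u,v)$-variables, using the stabilizers of nonzero $u$ or $v$ (mirabolic-type subgroups) together with Frobenius reciprocity, shrinks the support further. The result is that $f$ and its Fourier transforms are supported in a closed set $\Gamma$ of ``doubly nilpotent'' triples, those with $X$ nilpotent and $vX^ku=0$ for all $k$.

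\emph{Main obstacle: the final vanishing on $\Gamma$.} On $\Gamma$ I would use homogeneity and uncertainty. In the $p$-adic case, I would use homogeneity of distributions with respect to the scaling action and the $\mathfrak{sl}_2$ (Weil-type) action generated by the Fourier transform. Combining this with a count of dimensions of the $\GL_n$-orbits in $\Gamma$ against their stabilizers shows that no nonzero $\chi$-equivariant distribution can be supported on $\Gamma$ with Fourier transform also supported there. In the archimedean case, I would replace this by the Euler-vector-field degree argument of Jiang–Sun–Zhu/Aizenbud–Gourevitch: on each stratum, the transversal degrees of the Euler field are bounded below, and these bounds contradict the Fourier-invariance constraints. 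This orbit-by-orbit analysis is the technical heart of the argument, and is precisely where I would rely on the cited literature.
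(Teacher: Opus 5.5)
Your proposal matches the paper: the paper's proof of this proposition consists solely of citing \cite{AG,SZ} for the archimedean case and \cite{AGRS} for the non-archimedean case, which is exactly what you propose. The self-contained sketch you append is not needed and is only a heuristic outline (e.g.\ as written, the mirabolic Frobenius-descent step cannot use the $\chi$-equivariance, since $\tau$ interchanges the regions $u\neq 0$ and $v\neq 0$), but the cited argument does not rely on it.
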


\begin{proof}
The proposition is proved in \cite{AG,SZ} for the archimedean case and in \cite{AGRS} for the non-archimedean case.
\end{proof}

We now present a reformulation of Proposition \ref{prop:algebra} (cf.~\cite{AG,AGRS,SZ}). Write 
\[
\breve{\mathrm{GL}}_n(\rk)=\mathrm{GL}_n(\rk)\rtimes \{ \pm1\},
\]
where the semi-direct product is defined by the action 
\[
-1.h := h^{-t}, \quad h\in \mathrm{GL}_n(\rk).
\]
Let $\breve{\mathrm{GL}}_n(\rk)$ act on $\mathrm{GL}_n(\rk)$ by 
\[
(h,\delta).g := \begin{cases} 
h g h^{-1}, &\text{if } \delta=1; \\
h g^t h^{-1}, &\text{if } \delta=-1,
\end{cases}
\]
and on $\mathfrak{gl}_n(\rk)$ through its differential by 
\begin{equation}\label{eq:Hactgl}
(h,\delta).X := \begin{cases} 
h X h^{-1}, &\text{if } \delta=1;\\
h X^t h^{-1}, &\text{if } \delta=-1,
\end{cases}
\end{equation}
where $(h,\delta)\in \breve{\mathrm{GL}}_n(\rk)$, $g\in \mathrm{GL}_n(\rk)$, and $X\in \mathfrak{gl}_n(\rk)$.
Let $\breve{\mathrm{GL}}_n(\rk)$ act on $\rk^{n\times 1}\times \rk^{1\times n}$ by 
\[
(h,\delta).(u,v) := \begin{cases}
(hu, vh^{-1}), &\text{if } \delta=1;\\
(hv^t, u^th^{-1}), &\text{if } \delta=-1,
\end{cases}
\]
where $(u,v)\in \rk^{n\times 1}\times \rk^{1\times n}$. Finally, let $\breve{\mathrm{GL}}_n(\rk)$ act diagonally on $\mathfrak{gl}_n(\rk)\times \rk^{n\times 1}\times \rk^{1\times n}$.

Denote by 
\[
\breve{\chi}:\ \breve{\mathrm{GL}}_n(\rk)\rightarrow \{\pm 1\}
\]
the quadratic character of $\breve{\mathrm{GL}}_n(\rk)$ projecting to the second factor. 
A tempered generalized function $f$ on $\mathfrak{gl}_n(\rk) \times \rk^{n\times 1} \times \rk^{1\times n}$ is said to be $\breve{\chi}$-equivariant if for every $g\in \breve{\mathrm{GL}}_n(\rk)$, 
\[
f(g.x)=\breve{\chi}(g) f(x), \quad x\in \mathfrak{gl}_n(\rk) \times \rk^{n\times 1} \times \rk^{1\times n}.
\]
The space of such generalized functions is denoted by
\[
\mathrm{C}^{-\xi}_{\breve{\chi}}(\mathfrak{gl}_n(\rk)\times \rk^{n\times 1}\times \rk^{1\times n}).
\]
Similar notation will be used later on without further explanation. Then Proposition \ref{prop:algebra} is equivalent to 
\begin{equation}\label{eq:refor1}
\mathrm{C}^{-\xi}_{\breve{\chi}}(\mathfrak{gl}_n(\rk)\times \rk^{n\times 1}\times \rk^{1\times n}) = 0.
\end{equation}

\vspace{3mm}

\noindent \textbf{Proof of Proposition \ref{prop:group}.} 
Let us form a quadratic extension 
\[
\breve{\mathbb{H}}=(\mathrm{GL}_n(\rk)\times \mathrm{GL}_n(\rk))\rtimes \{ \pm1\}
\]
of the doubling group $\mathrm{GL}_n(\rk)\times \mathrm{GL}_n(\rk)$, where the semi-direct product is defined by the action 
\[
-1.(h_1,h_2) := (h_2^{-t}, h_1^{-t}), \quad h_1,h_2\in \mathrm{GL}_n(\rk).
\]
Let $\breve{\mathbb{H}}$ act on $\mathrm{GL}_n(\rk')$ by 
\[
(h_1,h_2,\delta).g := \begin{cases}
h_1gh_2^{-1}, &\text{if } \delta=1;\\
h_1g^th_2^{-1}, &\text{if } \delta=-1,
\end{cases}
\]
and on $\rk^{n\times 1}\times \rk^{1\times n}$ by 
\[
(h_1,h_2,\delta).(u,v) := \begin{cases}
(h_1u, v h_2^{-1}), &\text{if } \delta=1;\\
(h_1 v^t, u^t h_2^{-1}), &\text{if } \delta=-1,
\end{cases}
\]
where $(h_1,h_2,\delta)\in \breve{\mathbb{H}}$, $g\in \mathrm{GL}_n(\rk')$, and $(u,v)\in \rk^{n\times 1}\times \rk^{1\times n}$. 
Then Proposition \ref{prop:group} can be reformulated as the equality 
\begin{equation}\label{eq:gprefor}
\mathrm{C}^{-\xi}_{\breve{\eta}}(\mathrm{GL}_n(\rk')\times \rk^{n\times 1}\times \rk^{1\times n}) = 0.
\end{equation}
Here $\breve{\eta}$ denotes the quadratic character of $\breve{\mathbb{H}}$ defined by  
\[
\breve{\eta}(g_1,g_2,\delta) = \delta, \quad (g_1,g_2,\delta)\in \breve{\mathbb{H}},
\]
and $\breve{\mathbb{H}}$ acts diagonally on $\mathrm{GL}_n(\rk')\times \rk^{n\times 1}\times \rk^{1\times n}$. 

An element $x\in \mathrm{GL}_n(\rk')$ is said to be $\breve{\mathbb{H}}$-semisimple if its orbit $\breve{\mathbb{H}}.x$ in $\mathrm{GL}_n(\rk')$ is closed. 
By (a version of) the generalized Harish-Chandra descent (see \cite[Theorem 2.2.15]{AG}), 
the equality \eqref{eq:gprefor} is implied by the following assertion:
\begin{equation}\label{eq:descentform}
\mathrm{C}^{-\xi}_{\breve{\eta}_x} (\mathrm{N}_{\breve{\mathbb{H}}.x,x}^{\mathrm{GL}_n(\rk')}\times \rk^{n\times 1}\times \rk^{1\times n}) = 0
\end{equation}
for all $\breve{\mathbb{H}}$-semisimple elements $x$ in $\mathrm{GL}_n(\rk')$. 
Here, $\breve{\eta}_x$ denotes the restriction of $\breve{\eta}$ to the stabilizer $\breve{\mathbb{H}}_x\subset \breve{\mathbb{H}}$ of $x$, and 
\[
\mathrm{N}_{\breve{\mathbb{H}}.x,x}^{\mathrm{GL}_n(\rk')} := \mathrm{T}_x(\mathrm{GL}_n(\rk')) / \mathrm{T}_x(\breve{\mathbb{H}}.x)
\]
denotes the normal space, where $\mathrm{T}_x$ indicates the tangent space at $x$. 

Write $\gamma$ for the involution of $\mathrm{GL}_n(\rk')$ given by the nontrivial element of the Galois group of $\rk'/\rk$. 
By \cite[Lemma 4.2]{CS} (see also \cite[Lemma 7.1.2]{AGS}), it suffices to prove the equality \eqref{eq:descentform} in the case when $x$ is \emph{normal} in the sense that 
\[
x\gamma(x) = \gamma(x)x.
\]
Thus, assume that $x$ is a $\breve{\mathbb{H}}$-semisimple normal element in $\mathrm{GL}_n(\rk')$. Set 
\[
s := x(\gamma(x))^{-1} \in \mathrm{GL}_n(\rk).
\]
Then the stabilizer $\breve{\mathrm{GL}}_n(\rk)_s$ of $s$ in $\breve{\mathrm{GL}}_n(\rk)$ is isomorphic to $\breve{\mathbb{H}}_x$ via the map  
\begin{equation}\label{eq:gpiso}
(h,\delta) \mapsto \begin{cases} 
(xhx^{-1}, h, 1), &\text{if } \delta=1;\\
(xhx^{-t}, h, -1), &\text{if } \delta=-1.
\end{cases}
\end{equation}

Identify the tangent space $\mathrm{T}_x(\mathrm{GL}_n(\rk'))$ with the Lie algebra $\mathfrak{gl}_n(\rk')$ through left translation, and let $\breve{\mathrm{GL}}_n(\rk)$ act on $\mathfrak{gl}_n(\rk')$ as in \eqref{eq:Hactgl}. Then, via the isomorphism \eqref{eq:gpiso}, the isotropy representation $\mathrm{T}_x(\mathrm{GL}_n(\rk'))$ of $\breve{\mathbb{H}}_x$ is identified with the representation $\mathfrak{gl}_n(\rk')$ of $\breve{\mathrm{GL}}_n(\rk)_s\subset \breve{\mathrm{GL}}_n(\rk)$. 
Furthermore, we have (cf.~\cite[Lemma 4.3]{CS} or \cite[Proposition 7.2.1]{AGS})
\[
\mathrm{N}_{\breve{\mathbb{H}}.x,x}^{\mathrm{GL}_n(\rk')}= 
\frac{\mathfrak{gl}_n(\rk')}{\mathfrak{gl}_n(\rk)+\mathrm{Ad}_{x^{-1}}\mathfrak{gl}_n(\rk)} \cong \mathfrak{gl}_n(\rk)_s := \{X\in \mathfrak{gl}_n(\rk)\mid sXs^{-1}=X\}
\]
as representations of $\breve{\mathrm{GL}}_n(\rk)_s$. Thus the equality \eqref{eq:descentform} is equivalent to 
\begin{equation}\label{eq:finalref}
\mathrm{C}_{\breve{\chi}_s}^{-\xi}(\mathfrak{gl}_n(\rk)_s\times \rk^{n\times 1}\times \rk^{1\times n}) = 0.
\end{equation}
Here $\breve{\chi}_s$ denotes the restriction of $\breve{\chi}$ to the stabilizer $\breve{\mathrm{GL}}_n(\rk)_s$. 

Since $s$ is semisimple (as a matrix) in $\mathrm{GL}_n(\rk)$ (see \cite[Proposition 7.2.1]{AGS}), there exist  positive integers $n_1,\dots,n_r$ and field extensions $\rk_1,\dots,\rk_r$ of $\rk$ such that 
\begin{eqnarray}\label{isogl}
&&\breve{\GL}_n(\rk)_s\\
\nonumber &\cong& \breve{\GL}_{n_1}(\rk_1)\times_{\{\pm 1\}} \breve{\GL}_{n_2}(\rk_2)\times_{\{\pm 1\}} \cdots \times_{\{\pm 1\}}\breve{\GL}_{n_r}(\rk_r)\quad (\textrm{the fiber product}).
\end{eqnarray}
Furthermore, we have a linear isomorphism
\[
\mathfrak{gl}_n(\rk)_s\times \rk^{n\times 1}\times \rk^{1\times n} \cong 
\prod_{i=1}^r \mathfrak{gl}_{n_i}(\rk_i)\times \rk_i^{n_i\times 1}\times \rk_i^{1\times n_i}
\]
that is equivariant with respect to the isomorphism \eqref{isogl}. 
This together with the equality \eqref{eq:refor1} implies that
the equality \eqref{eq:finalref} holds. 
Thus we complete the proof of Proposition \ref{prop:group}. 

\vspace{3mm}

\noindent \textbf{Proof of Theorem \ref{thm:main3}.} 
We apply Theorem \ref{thm:main2} in the following setting:
\begin{itemize}
    \item $G := \mathrm{GL}_n(\rk')$, $H_1 := H_2 := \mathrm{GL}_n(\rk)$;
    \item $\omega_1 := \mathrm{D}^\varsigma(\rk^{n\times 1})$ and $\omega_2 := \mathrm{D}^{\varsigma}(\rk^{1\times n})$;
    \item $\sigma: G \rightarrow G$ is the transpose map;
    \item $\sigma_1: \omega_1 \rightarrow \omega_2$ is the push-forward of the transpose map $\rk^{n\times 1} \rightarrow \rk^{1\times n}$, and $\sigma_2$ is its inverse; 
    \item the Chevalley automorphism $\sigma': G \rightarrow G$ is the inverse transpose map;
    \item $\varphi := \sigma_1: \omega_1 \rightarrow \omega_2$.
\end{itemize}
Here $\mathrm{GL}_n(\rk)$ acts on $\mathrm{D}^\varsigma(\rk^{n\times 1})$ and $\mathrm{D}^{\varsigma}(\rk^{1\times n})$ via left and right multiplications, respectively. 

In view of Proposition \ref{prop:group}, 
Theorem \ref{thm:main2} (applied in the above setting) implies that 
\[
(\mathrm{GL}_n(\rk'), \mathrm{D}^\varsigma(\rk^{n\times 1}))
\]
is a Gelfand pair. 
The theorem then follows from the observation that the Fourier transform from $\mathrm{D}^\varsigma(\rk^{1\times n})$ to $\mathrm{C}^\varsigma(\rk^{n\times 1})$ is $\mathrm{GL}_n(\rk)$-equivariant. 

\begin{remt}
Note that every character of $\mathrm{GL}_n(\rk)$ extends to a character of $\mathrm{GL}_n(\rk')$. It then follows from Theorem \ref{thm:main3} that for every character $\chi$ of $\mathrm{GL}_n(\rk)$,
\[
(\mathrm{GL}_n(\rk'), \mathrm{C}^\varsigma(\rk^{1\times n})\otimes \chi)
\]
is a Gelfand pair. 
\end{remt}
 \section*{Acknowledgments}
 F. Chen was supported in part by the National Natural Science Foundation of China (Grant Nos. 12131018 and 12471029).
 B. Sun was supported in part by National Key R \& D Program of China No. 2022YFA1005300  and New Cornerstone Science foundation.

\end{document}